\documentclass{amsart}
\pdfoutput=1
\usepackage[utf8]{inputenc}
\usepackage[pdftex]{graphicx}
\usepackage{subfigure}
\usepackage{color}
\usepackage{amsmath,amssymb,amsfonts,amsthm}
\usepackage{amscd,enumerate}
\usepackage{latexsym}
\usepackage{euscript}
\usepackage{setspace}
\usepackage{enumitem}
\usepackage{caption}


\newtheorem{thmm}{Theorem}

\newtheorem{thm}{Theorem}
\newtheorem{prop}[thm]{Proposition}
\newtheorem{lemma}[thm]{Lemma}

\newtheorem{condition}[thm]{Conditions}

\theoremstyle{definition}
\newtheorem{defi}[thm]{Definition}

\newtheorem{exem}[thm]{Example}

\numberwithin{thm}{section} 

\theoremstyle{remark}
\newtheorem*{remark}{Remark}

\newcommand{\R}{\mathbb{R}}

\newcommand{\N}{\mathbb{N}}

\newcommand{\W}{\mathcal{W}}

\newcommand{\pp}{\mathcal{P}}
\newcommand{\G}{\mathcal{G}}
\newcommand{\Ss}{\mathcal{S}}

\newcommand{\diam}{\operatorname{diam}}
\newcommand{\dis}{\operatorname{dis}}


\newcommand{\dgh}{d_{GH}} 


\newcommand{\talpha}{\tilde{\alpha}}


\begin{document}

\title[Gromov-Hausdorff Convergence]{Gromov-Hausdorff Convergence of Metric Quotients and Singular Conic-Flat Surfaces}
\author{Marcel Vinhas}
\date{July 2020}
\subjclass[2010]{Primary: 51F99; Secondary: 57M50, 53C45, 53C23.} 

\begin{abstract}
Given metric quotients $S$ and $S_n$, $n \in \N$, of a metric space $X$, sufficient conditions are provided on the data defining them guaranteeing that $S$ is the Gromov-Hausdorff limit of $S_n$.
These conditions are recognized within metric quotients of plane polygons determined by side-pairings known as plain paper-folding schemes. 
In particular, concrete examples are given of sequences of two-dimensional conic-flat spheres converging to spheres that are conic-flat except around certain singularities, some of them with unbounded curvature in the sense of comparative geometry.
\end{abstract}

\maketitle

\section*{Introduction}

For a fixed metric space $X$, consider the family $\Ss$ of every metric quotient $S$ obtained from $X$.
The first result in this paper is a tool for proving that a given $S \in \Ss$ is the limit of a given sequence $S_n \in \Ss$. 
It is based on comparing the data $\G$ and $\G_n$ that produces the respective quotients, giving precision to the idea that if these data ``look alike'', then the associated metric quotients are close with respect to the the Gromov-Hausdorff distance.
Intuitively, the conditions of Theorem \ref{thm:main} require that for a ``collapsing'' sequence $D_n$ of subsets of $X$, points outside $D_n$ are equally related by $\G$ and $\G_n$, and points outside $D_n$ are related to points inside it by $\G$ and $\G_n$ in a controlled manner.
More precisely, for a collection $\G$ of subsets of $X$ and $x,y \in X$, denote $x \, \G \, y$ if $x = y$ or there exist $g \in \G$ with $x,y \in g$. 
The boundary, the complement and the diameter of each subset $D$ of $X$ are denoted, respectively, $\partial D$, $X \setminus D$ and $\diam D$; and the Gromov-Hausdorff distance between metric spaces $S$ and $S'$ is denoted $\dgh (S,S')$.

\begin{thmm}
\label{thm:main}
Let $S$ and $S_n$, $n \in \N$, be the metric quotients associated to collections $\G$ and $\G_n$ of subsets of a metric space $X$. 
Suppose that, for each $n \in \N$, there exist $g^n \in \G$, $g_n \in \G_n$ and $D_n \subset X$ such that:
\begin{enumerate}[label=\roman*)]
 \item \label{thm:gh2-hyp1}
 For any $x, y \in X \setminus D_n$, $x \, \G \, y$ if, and only if, $x \, \G_n \, y$.
 
 \item \label{thm:gh2-hyp2}
 For any $x \in X \setminus D_n$ and $y \in D_n$, $x \, \G \, y$ implies that $x, y \in g^n$, and $x \, \G_n \, y$ implies that $x,y \in g_n$.
 
 \item \label{thm:gh2-hyp3}
 $g^n \cap (\partial D_n \setminus D_n) \neq \emptyset$ and $g_n \cap (\partial D_n \setminus D_n) \neq \emptyset$. 
\end{enumerate}
If $\lim_{n \to + \infty} \diam D_n = 0$, then $\lim_{n \to + \infty} \dgh (S,S') = 0$.
\end{thmm}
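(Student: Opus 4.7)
The plan is to bound the Gromov--Hausdorff distance $\dgh(S, S_n)$ via a correspondence argument. Let $\pi\colon X \to S$ and $\pi_n\colon X \to S_n$ denote the quotient projections, and consider
\[
R_n := \{(\pi(x), \pi_n(x)) : x \in X\} \subset S \times S_n.
\]
Since $\pi$ and $\pi_n$ are surjective, $R_n$ is a correspondence. Using the standard estimate $\dgh(S, S_n) \le \tfrac{1}{2}\,\dis(R_n)$, the theorem reduces to showing that
\[
\sup_{x,y \in X} \bigl|d_S([x]_S, [y]_S) - d_{S_n}([x]_{S_n}, [y]_{S_n})\bigr| \longrightarrow 0 \quad \text{as } n \to \infty.
\]

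To bound the distortion, I would compare chains. Each quotient pseudo-distance is an infimum of $\sum_i d_X(a_i,b_i)$ over chains $x = a_0, b_0, a_1, \ldots, b_k = y$ alternating segments with $\G$- or $\G_n$-identifications. Fixing $\varepsilon > 0$ and a near-optimal $\G_n$-chain realizing $d_{S_n}([x]_{S_n},[y]_{S_n}) + \varepsilon$, I would construct a $\G$-chain by classifying each identification $b_i \, \G_n \, a_{i+1}$ according to the position of its endpoints relative to $D_n$. When both endpoints lie in $X \setminus D_n$, hypothesis (i) immediately promotes it to a $\G$-identification, and nothing needs to change. When both lie in $D_n$, we have $d_X(b_i, a_{i+1}) \le \diam D_n$, so the identification can be absorbed by merging the flanking segments $[a_i, b_i]$ and $[a_{i+1}, b_{i+1}]$ into a single segment $[a_i, b_{i+1}]$ of length at most their sum plus $\diam D_n$. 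For a cross-boundary identification---say $b_i \in D_n$ and $a_{i+1} \in X \setminus D_n$---hypothesis (ii) forces $b_i, a_{i+1} \in g_n$; the access point $q_n \in g_n \cap (\partial D_n \setminus D_n)$ provided by (iii) lies in $\overline{D_n}$ (hence within $\diam D_n$ of $b_i$) and also in $X \setminus D_n$, so hypothesis (i) upgrades the $\G_n$-identification $q_n \, \G_n \, a_{i+1}$ (both endpoints outside, both in $g_n$) to a valid $\G$-identification $q_n \, \G \, a_{i+1}$. The problematic step is thus replaced by the segment $[b_i, q_n]$ of length at most $\diam D_n$ followed by that $\G$-identification. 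The reverse inequality follows by the symmetric construction, using $p_n$ and $g^n$ in place of $q_n$ and $g_n$.

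Each single modification contributes at most $\diam D_n$ to the total chain length, so if the number of modifications in a near-optimal chain were uniformly bounded the proof would be complete. The main obstacle I anticipate is exactly this: a priori the number of $D_n$-involving identifications in a chain is not controlled independently of the chain, and the naive estimate $(\text{number of modifications}) \cdot \diam D_n$ need not tend to zero. To resolve this I would pre-process the chain to coalesce multiple excursions into $D_n$ into a single one, exploiting that once inside $D_n$ everything lies within $\diam D_n$ and that consecutive cross-boundary identifications via $g_n$ can be merged through the common access point $q_n$. This should reduce the argument to chains with at most two cross-boundary modifications, making the total overhead $O(\diam D_n)$ and completing the proof.
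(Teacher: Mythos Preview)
Your strategy is essentially the paper's, and the crucial insight---that all cross-boundary jumps factor through the single element $g_n$ (resp.\ $g^n$), so the first and last contacts with $D_n$ can be bridged directly---is exactly what makes the overhead $O(\diam D_n)$ rather than proportional to the number of modifications. The paper formalises your ``coalescing'' by looking at \emph{steps} rather than identifications: it takes the first and last indices $j_0 \le j_1$ for which the step $\{x_j,y_j\}$ meets $D_n$, and replaces the whole subwalk $\{x_{j_0},y_{j_0};\ldots;x_{j_1},y_{j_1}\}$ in one shot (three cases according to whether the walk enters/leaves $D_n$ via a step or a jump). This yields $|d^{\G}(x,y)-d^{\G_n}(x,y)| \le 2\diam D_n$ for all $x,y \in X\setminus D_n$, with no need to iterate or merge consecutive excursions. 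Your sketch would benefit from adopting this first/last formulation explicitly, since ``coalesce multiple excursions'' leaves ambiguous what happens to the outside portions sandwiched between excursions.

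The one structural difference is that you use the full correspondence $R_n=\{(\pi(x),\pi_n(x)):x\in X\}$, whereas the paper restricts to the $r$-net $A=X\setminus D_n$ and invokes a separate lemma: if $A$ is an $r$-net with $\sup_{a,a'\in A}|d^{\G}-d^{\G_n}|\le r$, then $\dgh(S,S_n)<5r/2$. Your route is more direct but incurs a small extra obligation you did not address: when $x$ or $y$ lies in $D_n$, the chain-surgery argument as written does not immediately apply. This is easily patched (move such an endpoint to any point of $\partial D_n\setminus D_n$ at cost $\le\diam D_n$ in both semi-metrics and reduce to the case $x,y\notin D_n$), so it is a gap in exposition rather than in the argument. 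The paper's net device sidesteps this by never needing the estimate at points of $D_n$, at the price of an auxiliary lemma and a slightly worse constant.
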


In case $X$ is compact, standard results on the Gromov-Hausdorff topology (see Theorem 7.4.15 in \cite{BBI}) guarantee that any sequence $S_n \in \Ss$ contains a subsequence that converges to a compact metric space.
In this context, Theorem \ref{thm:main} may be useful for recognizing such limit, and for constructing sequences approximating a given limiting space. Taking a $X$ as fixed polygon $P$ in some model plane, collections $\G$ associated to side-pairings $\pp$ of $P$ known as paper-folding schemes \cite{dCH} are considered.
These pairings glue together, isometrically, interior-disjoint plane segments contained in $\partial P$, as in classical surface theory.
However, infinitely many pairings are allowed, provided that the paired segments cover $\partial P$ up to measure-zero.
While the formal definitions are a bit lengthy (section \ref{sec:back-paperspaces}), it is easy to come up with and represent the simplest identifications patterns in paper-folding schemes as in Figures \ref{fig:general}, \ref{fig:canon1} and \ref{fig:repeat}, where dotted lines connect paired points.
Specifically, plain paper-folding schemes are considered, this being a restriction on how the paired points are linked along $\partial P$.
These are known to produce quotients homeomorphic to the $2$-sphere.
A particular application of the following result is shown in Figure \ref{fig:canon1}.

\begin{thmm}[Theorem \ref{thm:paperlimit}]
\label{thm:main2}
Given a plain paper-folding scheme, sequences of plain paper-folding schemes approximating it in the Gromov-Hausdorff sense are constructed.
\end{thmm}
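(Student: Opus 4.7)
The plan is to apply Theorem \ref{thm:main} with $X = P$ (the underlying polygon) and $\G$, $\G_n$ the collections associated respectively to the given plain paper-folding scheme $\pp$ and to approximating plain paper-folding schemes $\pp_n$ that I will construct. Since a plain paper-folding scheme may pair infinitely many segments of $\partial P$, the natural strategy is a truncation: enumerate the paired segments of $\pp$ by decreasing length (or by some similar exhaustion) and, at stage $n$, keep intact the $n$ largest pairings while collapsing the remaining infinitesimal ones into a small region $D_n \subset P$. I would choose $D_n$ as a thin neighborhood of the portion of $\partial P$ carrying the discarded pairings, engineered so that $\diam D_n \to 0$ by picking stages where the residual pairings are confined to short subarcs.

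The approximating scheme $\pp_n$ should agree with $\pp$ on all pairings contained in $X \setminus D_n$ and replace the residual pairings inside $D_n$ by a single admissible arc-pairing that keeps the scheme plain. Concretely, I would close up the "defect" left by deletion by introducing one new pairing $g_n \in \G_n$ supported on the arcs of $\partial P$ meeting $D_n$; the corresponding element $g^n \in \G$ would be the class of $\pp$ restricted to the same portion. The plainness of $\pp$ (the non-crossing/linking condition making the quotient a sphere) should ensure that the new pairing can be introduced without creating crossings, since the discarded pairings already sit inside a connected subarc bounded by fixed endpoints.

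The verification of the hypotheses of Theorem \ref{thm:main} would then run as follows. Hypothesis (i) is automatic from the construction, since $\G$ and $\G_n$ differ only on pairings whose support lies in $D_n$. Hypothesis (ii) should follow because any $\G$-relation (respectively $\G_n$-relation) connecting $X \setminus D_n$ to $D_n$ must come from a pairing that crosses $\partial D_n$; by the way $D_n$ is carved out along $\partial P$, these crossing relations all land in the designated transition class $g^n$ (resp.\ $g_n$). Hypothesis (iii) holds because $g^n$ and $g_n$ are designed to meet $\partial D_n \setminus D_n$ at the endpoints of the truncation arc. The conclusion $\dgh(S,S_n) \to 0$ then follows from Theorem \ref{thm:main}.

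The main obstacle I expect is preserving plainness of $\pp_n$ while confining all modifications to $D_n$. The plain condition is a global combinatorial constraint on how paired points link along $\partial P$, so simply deleting countably many small pairings can destroy this structure at the interface with $\partial D_n$. Overcoming this will require a careful choice of $D_n$ aligned with the linking pattern of $\pp$, and likely a case analysis (or induction on the complexity of pairings meeting $\partial D_n$) to check that the single substitution pairing $g_n$ can always be installed consistently. The rest of the argument, including the diameter estimate $\diam D_n \to 0$, should reduce to routine measure-theoretic bookkeeping using the fact that $\pp$ covers $\partial P$ up to a measure-zero set.
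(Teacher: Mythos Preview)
Your overall strategy --- localize the difference between $\pp$ and $\pp_n$ to a shrinking region $D_n$ and invoke Theorem~\ref{thm:main} --- matches the paper's, but two specific choices in your setup would not go through as stated.

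First, truncating by pairing length does not produce a small $D_n$. In a general plain scheme the short pairings may accumulate at several (even infinitely many) points scattered around $\partial P$, so the residual pairings at stage $n$ need not lie in a single short subarc; your parenthetical ``picking stages where the residual pairings are confined to short subarcs'' is exactly the hard part and is not justified by length bookkeeping. The paper sidesteps this by taking $D_n=\overset{\circ}{\gamma_n}$ to be the \emph{interior of a $\pp$-plain arc} in $\partial P$ (Definition~\ref{defi:plainarc}): a plain arc is, by definition, one whose interior is saturated for the pairing relation and on which $\pp$ restricts to an unlinked pattern, so it is precisely the kind of self-contained piece on which $\pp$ may be replaced by any other plain pattern (e.g.\ a pair of folds) without touching the rest. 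Multiple accumulation points are then handled one at a time via the triangle inequality for $\dgh$ (Example~\ref{exem:repeat}), not by a single $D_n$. There is no need for a two-dimensional neighborhood.

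Second, with $\G$ and $\G_n$ taken as the raw pairings, condition~(ii) of Theorem~\ref{thm:main} generally fails: several distinct pairings can connect $X\setminus D_n$ to $D_n$, and they cannot all equal one $g^n$. The paper instead lets $\G$ and $\G_n$ be the collections of $\sim_\pp$- and $\sim_{\pp_n}$-equivalence classes (which yield the same quotient semi-metrics) and sets $g^n=[z_n]_{\sim_\pp}$, $g_n=[z_n]_{\sim_{\pp_n}}$, where $z_n,w_n$ are the endpoints of $\gamma_n$. Theorem~\ref{thm:plain} then does the work you were anticipating as a ``case analysis'': it gives $z_n\sim w_n$ under both relations (hence condition~(iii)), and it shows that $\gamma_n\setminus g^n$ and $\gamma_n\setminus g_n$ are saturated, which is exactly condition~(ii). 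Condition~(i) follows since $\pp$ and $\pp_n$ agree off $\overset{\circ}{\gamma_n}$ and $(P,\pp_n)$ is again plain.
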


If $\pp$ prescribe only finitely many pairings, the associated quotient $S$ is a conic-flat surface, also known as polyhedral, in the sense that every point has either a flat or conical neighborhood.   
On the other hand, gluing by infinitely many pairings may produce points in $S$ around which the metric is not described by these models.
These are called \textit{singular}, and simple examples of such are accumulations of conical points, and points with infinite total angle around it.
Theorem \ref{thm:main2} is applied to approximate examples of conic-flat spheres with singularities, in the Gromov-Hausdorff sense, by conic-flat spheres without singularities (Examples \ref{exem:general}, \ref{exem:canon1} and \ref{exem:repeat}).
This is known to imply that the convergence also occurs uniformly -- see Exercise 7.15.14 in \cite{BBI}.
Some singularities in the given examples implies that $S$ is not a space of bounded curvature in the sense of comparative geometry \cite{BBI}, both above and below.
This comes together with the total curvature exploding along the approximating sequence, in accordance with the theory of surfaces of bounded curvature \cite{AZ}, which guarantee that if certain bounds on the curvature of conical points hold for a sequence, then the property of being a surface of bounded curvature passes to uniform limits.

Theorems \ref{thm:main} and \ref{thm:main2} outgrew from particular cases first established in \cite{master}.
Originally, paper-folding schemes were considered due to their associated quotients being the domains of certain surface homeomorphisms that are relevant in the theory of dynamical systems (see \cite{unimodal, dCH} and references therein), later gaining attention also in three-manifold geometry/topology (recent developments are found in \cite{palimits}). 
In these contexts, both the quotients and the transformations show up in families.
Besides the geometric content mentioned above, the results in this paper were also motivated by the problem of passing limits along these families, which is tackled in \cite{dCH,dCH2} by means of uniformization techniques of complex analysis.
While the results presented here are more restrictive, due to requiring a fixed polygon and leaving aside any transformations defined on the quotients, the author hopes that they consist the first step in an alternate approach to this matter.

The paper is structured as follows: section \ref{sec:back} summarizes definitions and results on metric quotients, the Gromov-Hausdorff distance between compact metric spaces, and paper-folding schemes. The proofs of Theorems \ref{thm:main} and \ref{thm:main2} are given in sections \ref{sec:gromovhausdorff} and \ref{sec:paperlimit}, the later also containing the aforementioned examples. 
The author thanks to Andr\'e de Carvalho for presenting him to paper-folding schemes, and aknowledges the partial finantial support of CNPq, FAPESP and UFPA for his research.

\section{Background}
\label{sec:back}

This section summarizes the basic concepts developed in the main results of the paper.
Metric spaces, quotients and the Gromov-Hausdorff distance are presented as in \cite{BBI}, with minor non-essential modifications (the terminology adopted in the definition of metric quotients follows \cite{bonahon}). 
For paper-folding schemes, the main reference is \cite{dCH}.

\subsection{Metric quotients and intrinsic metrics} 
\label{sec:metquo}

For a non-empty set $X$, a function $d : X \times X \to \R \cup \{ \infty \}$ is a \textit{semi-metric} if, for any $x, y, z \in X$, $d(x,x) = 0$; $d(x,y) = d(y,x)$; and the triangle inequality is valid: $d(x,z) \leq d(x,y) + d (y,z)$.
A \textit{metric} on $X$ is a semi-metric on $X$ such that $d(x,y) = 0$ implies that $x = y$, and, in this case, the pair $(X,d)$ is a \textit{metric space}.
For a semi-metric $d$ on $X$, $d(x,y) = 0$ defines an equivalence relation $\sim$ on $X$, and $d$ induces a metric, also denoted $d$, in the quotient $X / {\sim}$.

Each collection $\G$ of subsets of $X$ induces a reflexive and symmetric relation on $X$, defined by $x \, \G \, y$ if, and only if, $x = y$ or there exists $g \in \G$ with $x, y \in g$.

\begin{defi} 
\label{defi:metquo}
Let $(X,d)$ be a metric space and $\G$ be any collection of subsets of $X$.
For $x, y \in X$, a \emph{$\G$-walk} $\W$ from $x$ to $y$ is a finite sequence of pairs $\{x_j, y_j \}_{j=1}^N$ of points in $X$ such that, for each $1 \leq j \leq N -1$, $y_j \, \G \, x_{j+1}$. 
Each $\{ x_j, y_j \}$ is a \textit{step} of $\W$ and each $\{ y_j \, ; \, x_{j+1}\}$ is a \textit{jump} of $\W$. 
The $\G$-walk $\{ x, y\}$ is called \emph{trivial}.
The \textit{length} of a step $\{ x_j, y_j \}$ is equal to $d(x_j,y_j)$, and the \emph{length} of $\W$ is equal to $\sum_{j = 1}^N d (x_j, y_j)$.
For any $x,y \in X$, 
\begin{equation}
d^\G (x,y) = \inf \sum_{j = 1}^N d (x_j, y_j) \, ,
\end{equation}
the infimum being over every $\G$-walk from $x$ to $y$, is the \textit{quotient semi-metric of $X$ associated to $\G$}.
The equivalence relation on $X$ defined by $d^\G (x,y) = 0$ is denoted $\sim_\G$, and the induced metric on the quotient $X/{\sim}_\G$ is also denoted $d^\G$.
The \emph{metric quotient associated to $\G$} is the metric space $(X/{\sim}_\G, d^\G)$.
The \textit{projection map $\pi: X \to X/{\sim}_\G$} associates to each $x$ its ${\sim}_\G$-equivalence class.
\end{defi}

Due to the trivial walk, $d^\G (x,y) \leq d (x,y) $ for every $x,y \in X $.
So, the projection map does not increase distances:
\begin{equation}
 d^\G (\pi (x), \pi(y)) \leq d (x,y) \, .
\end{equation}
In particular, it is a continuous map from $(X,d)$ to $(X/{\sim}_\G, d^\G)$, of course also surjective.

The collection $\G$ can be arbitrary, and is not assumed to be a decomposition of $X$, as in the definition of the quotient topology. This makes easier to describe $\G$, and anyway is not essential, since decompositions of $X$ generating the same quotient semi-metric can be obtained from any collection of subsets of $X$.
Of course, if $x,y \in g$ for some $g \in \G$, then $d^\G (x,y) = 0$. 
But, in general, the quotient topology and the metric quotient do not coincide, as there are other situations when $d^\G (x,y) = 0$. 
For instance, if $y$ is an accumulation point of $g \in \G$, then $d^\G (x,y) = 0$ for every $x \in g$. 
In this case, if $y \notin g$, the quotient topology fails to be Hausdorff, and so is not even metrizable. 
This happens frequently among collections of subsets associated to paper-folding schemes. 
Furthermore, in general, the topology of the metric quotient $(X/{\sim_\G},d^\G)$ may not be equivalent to the quotient topology on $X/{\sim}_\G$. 
However, if $X$ is compact, the identity map of $X/{\sim}_\G$ is a homeomorphism between these topologies, since it is a continuous bijection from a compact space to a Hausdorff one.

\begin{defi}
The \textit{diameter} of a subset $D$ of a metric space $(X,d)$ is defined by $\diam D = \sup_{x,y \in D} d (x,y)$.
\end{defi}

\begin{defi}
Let $\gamma : [a,b] \to X$ be a curve in a metric space $(X,d)$. 
The \textit{length of $\gamma$ in $X$} is: 
\begin{equation}
|\gamma| = \sup \sum_{i=0}^{N-1} d( \gamma(t_i) , \gamma(t_{i+1}) ) \in \R_{\geq 0} \cup \{ \infty \},
\end{equation}
where the supremum is taken over every partition $a = t_0 < \cdots < t_N = b$. 

For each subset $A$ of a metric space $(X,d)$, the induced \emph{intrinsic metric of $A$} is defined, for any $x, y \in A$, by:
\begin{equation}
d_A (x,y) = \inf |\gamma| \, , 
\end{equation}
where the infimum is over every curve $\gamma$ contained in $A$ connecting $x$ and $y$. 
In particular, $d_A (x,y) = \infty$ if there is no such curve. 
The metric $d$ is \emph{intrinsic} if $d_X = d$ and, in this case, $(X,d)$ is a \emph{length space}. Also, $d$ is \emph{strictly intrinsic} if, for any points at finite distance from each other, the infimum of the definition is realized by some path connecting them.
\end{defi}

\begin{remark}
 \begin{enumerate}
  \item 
  If a length space is compact, then its metric is strictly intrinsic.
  
  \item
  If $X$ is a length space, then any metric quotient of $X$ is a length space.
  In particular, the metric of any metric quotient of a compact length space is strictly intrinsic.
 \end{enumerate}
\end{remark}

\subsection{Gromov-Hausdorff distance}
\label{sec:back-gh}
The way of defining the Gromov-Hausdorff distance that better suits the purposes of the present paper is based on the concept of correspondences between metric spaces.

\begin{defi}
Let $(X_1, d_1)$ and $(X_2,d_2)$ be metric spaces. A \textit{correspondence between $X_1$ and $X_2$} is a subset $R \subset X_1 \times X_2$ with the following properties: for each $x_1 \in X_1$, there exist $x_2 \in X_2$ such that $(x_1,x_2) \in R$; and for each $x_2 \in X$, there exist $x_1 \in X$ such that $(x,y) \in R$. In both cases, uniqueness is not required. The \textit{distortion of a correspondence $R$} is:
\begin{equation}
 \dis R = \sup \{| d_1 (x_1, x_1') - d_2 (x_2, x_2') | \, : \, (x_1, x_2), (x_1 ', x_2 ') \in R \}.
\end{equation}
\end{defi}

To each correspondence $R \subset X_1 \times X_2$, the projections $R \to X_1$ and $R \to X_2$ are surjective. Reciprocally, given a pair of surjective functions $f_i : X \to X_i$, $R = \{ (f_1 (x), f_2 (x)) \in X_1 \times X_2 \, : \, x \in X \}$ is a correspondence between $X_1$ and $X_2$. Its distortion is given by:
\begin{equation}
 \dis R = \sup_{x,x' \in X} |d_1 (f_1 (x), f_1(x')) - d_2 (f_2 (x), f_2 (x')) | .
\end{equation}

\begin{defi}
Let $X_1$ and $X_2$ be metric spaces. The \textit{Gromov-Hausdorff distance between $X_1$ and $X_2$} is defined by:
\begin{equation} \label{eqn:dghdis}
 \dgh (X_1,X_2) = \frac{1}{2} \inf_R \dis R \, ,
\end{equation}
where the infimum is over every correspondence $R \subset X_1 \times X_2$.

\end{defi}

\begin{defi}
 Let $(X,d)$ be a metric space. 
 For each $r > 0$, an \emph{$r$-net on $X$} is a subset $A \subset X$ with the property that, given $x \in X$, there exist $a \in A$ such that $d(x,a) < r$. 
\end{defi}

\begin{remark}
 \begin{enumerate}
  \item
  The Gromov-Hausdorff distance satisfies the triangle inequality.
  
  \item 
  If $A$ is an $r$-net on a metric space $(X,d)$ and is considered as a metric space with the restriction of $d$, then $\dgh (X,A) \leq r$.
 \end{enumerate}

\end{remark}

\subsection{Paper-folding schemes}
\label{sec:back-paperspaces}

In the sequence, ``the plane'' is a fixed model plane. 
Namelly, either a hyperbolic plane, or the Euclidean plane, or a round $2$-sphere. 
Originally, paper-folding schemes and paper spaces were defined in \cite{dCH} only in the Euclidean setting.
This is imaterial for the present purposes, and the definitions therein work in the more general context with only small adaptations. 

\begin{defi}
An \textit{arc} in a metric space $X$ is a homeomorphic image $\gamma \subset X$ of the interval $[0,1]$. 
Its \textit{endpoints}, or \textit{extremities}, are the images of $0$ and $1$, and its \textit{interior} is the image $\overset{\circ}{\gamma}$ of the open interval $(0,1)$. 
A \textit{segment} is an arc in the plane that is a subset of a geodesic. 
The length of a segment $\alpha$ is denoted $|\alpha|$.
A \textit{simple closed curve} in $X$ is a homeomorphic image of the unit circle.
An arc or simple closed curve in the plane is \textit{polygonal} if it is the concatenation of finitely many segments. 
Its \textit{vertices} are the intersections of consecutive maximal segments, and the maximal segments themselves are its \textit{edges}. 
A (polygonal) multicurve is a disjoint union of finitely many (polygonal) simple closed curves.
A \textit{polygon} is a closed topological disk in a plane whose boundary is a polygonal simple closed curve. 
Its \textit{vertices} are the vertices of its boundary, and its \textit{sides} are the edges of its boundary.
In the spherical case, a polygon is assumed to be properly contained in a hemisphere.
A \textit{plane multipolygon} is a disjoint union of finitely many plane polygons, which may belong to distinct copies of a same plane. 
The intrinsic metric of a multipolygon $P$ induced by the ambient plane(s) metric is denoted $d_P$. 
The boundary of a plane multipolygon will be considered with its positive orientation induced by the orientation of the plane.
\end{defi}

\begin{defi}
Let $C$ be an oriented polygonal multicurve and $\talpha, \talpha' \subset C$ be segments of with the same length and disjoint interiors. 
The associated \textit{segment pairing} $\langle \talpha, \talpha ' \rangle$ is the decomposition of $\talpha \cup \talpha '$ obtained by gluing $\talpha$ and $\talpha '$ isometricaly reversing its orientations. 
More precisely, for parametrizations of $\talpha$ and $\talpha '$ by arc length compatible with their orientations, each element of $\langle \talpha, \talpha ' \rangle$ is of the form $\{ \talpha (t) , \talpha ' (|\talpha '| - t) \}$, and these points are said \textit{paired}. 
Paired points that belong to the interior of the paired segments constitutes \textit{interior pairs}. 
A \textit{fold} is a pairing whose segments have a common endpoint, this point being called a \textit{folding point}. 
The \textit{length} of a pairing is defined by $|\langle \talpha, \talpha ' \rangle| = |\talpha | = |\talpha '|$. A collection $\pp = \{ \langle \talpha_i, \talpha_i ' \rangle\}_i$ is \textit{interior disjoint} if the interiors of all the segments $\talpha_i$ and $\talpha_j '$ are disjoint. 
Notice that, in this case, $\pp$ is at most countable. 
It is \textit{full} if $\sum_i |\langle \alpha_i , \alpha_i ' \rangle|$ is equal to half the length of $C$. 
For a interior disjoint collection $\pp$ of segment pairings, $\pp$ will also denote the collection of subsets of $C$ whose elements are points paired by some element of $\pp$, and the associated reflexive and symmetric relation on $C$.
\end{defi}

\begin{defi}
A \textit{paper-folding scheme} is a pair $(P, \pp)$, where $P$ is multipolygon with its intrinsic metric $d_P$, and $\pp = \{ \langle \alpha_i , \alpha_i ' \rangle \}_i$ is full interior disjoint collection of segment pairings of $\partial P$. 
The metric quotient $(S,d_S) = (P / {\sim_\pp}, d_P^\pp)$ is the associated \textit{paper space}. 
Recall that the projection map is denoted $\pi : P \to S$.
\end{defi}

\begin{remark}
For a paper-folding scheme $(P,\pp)$, each interior point of $P$ is $\sim_\pp$-equivalent only to itself.
In fact, the restriction of the projection map to the interior of $P$ is a homeomorphism onto its image.
Also, each interior pair $\{ z, z' \}$ is precisely the $\sim_\pp$-equivalence class of it points, while each folding point coincides with its $\sim_\pp$-equivalence class. 
Every paper space is a compact length space, homeomorphic to the topological quotient $P/{\sim_\pp}$.
Its metric is strictly intrinsic and, away from a singular set, locally isometric to metric cones on circles.
In particular, this singular set is empty if $\pp$ consists of finitely many pairings and, in this case, $S$ is a conic-flat surface without singularities, also known as a polyhedral surface. 
For more details on this, as well as topological, measure-theoretic, and conformal developments of the subject, see \cite{dCH,dCH2}.
More information on the geometry of a paper space around certain kinds of singularities will be given in further works.
\end{remark}

This paper giver particular attention to paper-folding schemes called plain, that will now be defined.
Theorem \ref{thm:plain} below is contained in Lemmas 38 and 41, and Theorem 42, in \cite{dCH}.


\begin{defi}
\label{defi:plainscheme}
 Let $\gamma$ be a polygonal arc or polygonal simple closed curve. 
 Two pairs of (not necessarily distinct) points $\{ x, x ' \}$ and $\{ y, y ' \}$ of $\gamma$ are \textit{unlinked} if one pair is contained in the closure of a connected component of the complement of the other. 
 Otherwise, they are \textit{linked}.
 A reflexive and symmetric relation $R$ on $\gamma$ is \textit{unlinked} if any two unrelated pairs of related points are unlinked: that is, if $x \, R \, x'$, $y \, R \, y'$, and neither $x$ nor $x'$ is related to either $y$ or $y '$, then $\{ x, x ' \}$ and $\{ y, y ' \}$ are unlinked. 
 An interior disjoint collection $\pp$ of segment pairings on $\gamma$ is unlinked if the corresponding relation $\pp$ is unlinked.
 A paper-folding scheme $(P, \pp)$ is \textit{plain} if $P$ is a single polygon and $\pp$ is unlinked. 
\end{defi}

\begin{defi}
 Let $R$ be a reflexive and symmetric relation on a set $X$.
 A subset $U$ of $X$ is $R$-saturated if it contains $\{ y \in X \, | \, y \, R \, x \}$ for every $x \in U$.
\end{defi}

\begin{defi}
 \label{defi:plainarc}
 Let $(P,\pp)$ be a paper-folding scheme.
 An arc $\gamma \subset \partial P$ is $\pp$-\textit{plain} if:
 \begin{enumerate}
  \item 
  Every pairing in $\pp$ which intersects the interior of $\gamma$ is contained in $\gamma$ (that is, if $\langle \alpha, \alpha ' \rangle$ is a segment pairing and either $\alpha$ or $\alpha '$ intersects the interior of $\gamma$, then both $\alpha$ and $\alpha '$ are contained in $\gamma$); and
  \item
  The restriction of $\pp$ to $\gamma$ is unlinked.
 \end{enumerate}
A component $\gamma$ of $\partial P$ is \textit{plain} if it is $\pp$-saturated and the restriction of $\pp$ to $\gamma$ is unlinked. 
In particular, a paper folding scheme $(P, \pp)$ is plain if $P$ consists of a single polygon whose boundary $\partial P$ is plain.
\end{defi}

\begin{thm}[\cite{dCH}]
\label{thm:plain}
Let $(P, \pp)$ be a paper-folding scheme, $\sim_\pp$ be the equivalence relation induced by the quotient semi-metric $d^\pp$, and $\gamma \subset \partial P$ be an arc with endpoints $a$ and $b$.
\begin{enumerate}
 \item
 If $\gamma$ is $\pp$-plain, then $a \sim_\pp b$.
 
 \item 
 $\gamma \setminus [a] = \gamma \setminus [b]$ is $\sim_\pp$-saturated.
 
 \item
 Suppose that $(P, \pp)$ is plain and let $x,y \in \partial P$ be distinct points which are not in interior $\pp$-pairs.
 Then $x \sim_\pp y$ if, and only if, an arc in $\partial P$ with endpoints $x$ and $y$ is plain.
 
 \item
 If $(P, \pp)$ is plain, then the associated paper space is homeomorphic to the two-dimensional sphere.
\end{enumerate}
\end{thm}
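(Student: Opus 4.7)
The statement collects four properties of a $\pp$-plain arc and of plain schemes; I would argue each in turn.

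For (1), I would show $d^\pp(a,b)=0$ by exhibiting $\pp$-walks from $a$ to $b$ of arbitrarily small length. Fix $\varepsilon > 0$. The fullness of $\pp$, combined with the first $\pp$-plainness clause (every pairing meeting $\overset{\circ}{\gamma}$ is contained in $\gamma$), implies that the pairings of $\pp$ lying inside $\gamma$ have combined length $|\gamma|/2$. Choose a finite subcollection $\pp_0$ of such pairings whose combined length exceeds $|\gamma|/2 - \varepsilon/2$, so that $\gamma \setminus \bigcup \pp_0$ has total length less than $\varepsilon$. Because $\pp_0$ is finite and unlinked it admits a nested (non-crossing) structure, and I would proceed by induction on $|\pp_0|$: pick the pairing $\langle \alpha, \alpha' \rangle \in \pp_0$ whose endpoint is closest to $a$ along $\gamma$, walk the short initial uncovered segment to that endpoint of $\alpha$, jump to the corresponding endpoint of $\alpha'$, and recurse on the $\pp$-plain sub-arc from there to $b$. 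The total step length is bounded by the length of $\gamma \setminus \bigcup \pp_0$, hence by $\varepsilon$.

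Parts (2) and (3) then follow from an analysis of the $\sim_\pp$-classes of boundary points. For (2), the equality $\gamma \setminus [a] = \gamma \setminus [b]$ is immediate from (1), since $[a] = [b]$. For saturation, if $x \in \gamma \setminus [a]$ then $x$ must lie in $\overset{\circ}{\gamma}$, as both endpoints $a, b$ belong to $[a]$. Given $y \sim_\pp x$ witnessed by walks of vanishing length, the plainness clause forbids any pairing emanating from a point of $\overset{\circ}{\gamma}$ from escaping $\gamma$; propagating this local constraint through the sequence of jumps in such walks confines $y$ to $\gamma$ as well. For (3), the $(\Leftarrow)$ direction is a direct application of (1). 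For $(\Rightarrow)$, with $(P, \pp)$ plain and $x, y \in \partial P$ distinct and not in interior pairs, the two arcs of $\partial P$ from $x$ to $y$ partition the pairings of $\pp$; using the unlinked hypothesis on $\partial P$ together with the saturation from (2), I would argue that at least one of these arcs satisfies both clauses of $\pp$-plainness, producing the required plain arc.

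For (4), the paper space is $P/{\sim_\pp}$; interior points of $P$ form singleton classes (by the remark in section \ref{sec:back-paperspaces}), and boundary classes are controlled by (1)--(3). I would verify that this decomposition of the closed disk $P$ is upper semicontinuous with compact classes that, owing to the unlinked hypothesis, do not separate $P$. The quotient is then $S^2$ by a standard topological quotient result for disks, or alternatively by treating the finite-pairing case inductively (starting with an outermost pairing that splits $\partial P$ into two separated plain sub-arcs) and passing to a limit in the infinite case.

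The principal obstacle is part (2). Because $\sim_\pp$ is defined by $d^\pp = 0$ rather than as an equivalence generated by explicit pairings, ruling out the existence of some $y \notin \gamma$ with $y \sim_\pp x$ is not purely combinatorial; one must argue that no sequence of vanishing-length walks originating at $x$ can terminate outside $\gamma$, which requires a careful interaction between the local plainness of $\pp$ at $\overset{\circ}{\gamma}$ and the endpoint identification $a \sim_\pp b$ established in (1).
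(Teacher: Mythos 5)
The first thing to note is that the paper does not prove this statement: it is quoted from the reference \cite{dCH} (the surrounding text says it ``is contained in Lemmas 38 and 41, and Theorem 42, in \cite{dCH}''), so there is no in-paper argument to compare yours against. Judged on its own, your sketch of part (1) is the standard and correct idea, essentially complete in outline: fullness together with the first plainness clause gives that the pairings contained in $\gamma$ cover it up to measure zero; a finite subfamily $\pp_0$ covers all but $\varepsilon$; and the unlinked (laminar) structure makes the greedy scheme work --- walk to the nearest endpoint of a $\pp_0$-segment, jump, recurse toward $b$. One detail worth making explicit is that the orientation-reversing convention $\alpha(t)\leftrightarrow\alpha'(|\alpha'|-t)$ is exactly what guarantees the jump from the near endpoint of $\alpha$ lands at the endpoint of $\alpha'$ \emph{farthest} from $a$, so the recursion makes forward progress and each uncovered point is traversed at most once.

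The gaps are in (2)--(4). You correctly identify (2) as the crux and leave it open: since $\sim_\pp$ is defined by $d^\pp=0$ rather than as the equivalence relation generated by the pairings, ``propagating the local constraint through the jumps'' does not suffice, because a walk of small \emph{positive} length can step across $a$ or $b$ and leave $\gamma$. What is actually needed is a quantitative statement --- for instance, that for $x\in\overset{\circ}{\gamma}\setminus[a]$ every $\pp$-walk from $x$ to a point of $\partial P\setminus\gamma$ has length bounded below by a positive constant depending on the position of $x$ relative to $\{a,b\}$ --- and establishing this is where most of the work in \cite{dCH} lies; your $(\Rightarrow)$ direction of (3) needs the same input. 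For (4), the appeal to ``a standard topological quotient result for disks'' does not apply as stated: the $\sim_\pp$-classes are typically disconnected (an interior pair is two points), whereas Moore-type theorems require compact \emph{connected} non-separating elements. The usual repair is to double $P$ to a sphere and replace each class by a connected spanning continuum of disjoint arcs in the complementary disk --- which is precisely where the unlinked hypothesis enters --- before invoking Moore's theorem. None of this is fatal to your strategy, but as written the proposal establishes only (1) and the easy half of (3).
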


\section{Proof of Theorem \ref{thm:main}}
\label{sec:gromovhausdorff}

The proof is divided into Lemmas \ref{thm:gh1} and \ref{lemma:gh2-2} on pairs of metric quotients of $X$. 
The first relates the Gromov-Hausdorff distance between them and the difference between their associated semi-metrics over nets on $X$, while the second estimates this difference in the complement of a set $D$ as in the statement of Theorem \ref{thm:main}.
It will be convenient to reformulate its hypothesis as Conditions \ref{thm:gh2-hyp} relating pairs of collections of subsets on $X$.
The proof of Lemma \ref{lemma:gh2-2} depends on a simple general result on metric spaces, stated and proved as Proposition \ref{prop:gh2-1}.

\begin{lemma} 
\label{thm:gh1}
 Let $(X,d)$ be a metric space and $\G_i$ be collections of subsets of $X$, $i = 1, 2$. 
 Consider the associated semi-metrics $d^{\G_i}$ on $X$. 
 Given $r > 0$, if there exist an $r$-net $A$ on $X$ such that 
 \begin{equation} \label{eqn:thmgh1-hyp}
\sup_{a,a' \in A} |d^{\G_1} (a,a') - d^{\G_2} (a,a')| \leq r \, ,
 \end{equation}
 then $\dgh (S_1, S_2) < 5 r / 2$.
\end{lemma}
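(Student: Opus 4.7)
The natural strategy is to produce, via the $r$-net $A$, an intermediate metric space close to both $S_1$ and $S_2$, and conclude by the triangle inequality for $d_{GH}$ stated in the remark of Section \ref{sec:back-gh}. The candidate intermediates are the images $\pi_i(A) \subset S_i$ under the quotient projections $\pi_i : X \to S_i$.

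First I would argue that $\pi_i(A)$ is an $r$-net on $S_i$. Indeed, given any class $s \in S_i$, pick a representative $x \in X$; since $A$ is an $r$-net on $(X,d)$ there is $a \in A$ with $d(x,a) < r$, and because $\pi_i$ does not increase distances (the basic inequality $d^{\G_i}(\pi_i(x),\pi_i(y)) \le d(x,y)$ from Section \ref{sec:metquo}) one has $d^{\G_i}(s, \pi_i(a)) < r$. By the remark in Section \ref{sec:back-gh}, this yields $\dgh(S_i, \pi_i(A)) \le r$ for $i = 1, 2$, where $\pi_i(A)$ carries the restricted metric of $S_i$.

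Next I would build a correspondence $R \subset \pi_1(A) \times \pi_2(A)$ by letting $R = \{ (\pi_1(a), \pi_2(a)) : a \in A \}$. This is indeed a correspondence, as it is the image of $A$ under the pair of surjective maps $A \to \pi_i(A)$. Its distortion is
\begin{equation*}
\dis R = \sup_{a,a' \in A} \bigl| d^{\G_1}(\pi_1(a),\pi_1(a')) - d^{\G_2}(\pi_2(a),\pi_2(a')) \bigr| = \sup_{a,a' \in A} \bigl| d^{\G_1}(a,a') - d^{\G_2}(a,a') \bigr| \le r,
\end{equation*}
where the last inequality is exactly the hypothesis \eqref{eqn:thmgh1-hyp}. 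Hence $\dgh(\pi_1(A), \pi_2(A)) \le r/2$.

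Assembling the three estimates via the triangle inequality gives $\dgh(S_1, S_2) \le r + r/2 + r = 5r/2$. The only delicate point is upgrading this to the strict inequality stated in the lemma: I would use the fact that the defining condition $d(x,a) < r$ of an $r$-net is pointwise strict, so each individual pairing in the correspondences witnessing $\dgh(S_i,\pi_i(A)) \le r$ can be chosen with strict inequality, yielding strict bounds that combine to $< 5r/2$. This bookkeeping aside, the argument is entirely routine — the main conceptual step is recognizing that $A$ provides a common ``bridge'' through which the two quotient projections can be compared.
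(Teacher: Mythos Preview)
Your proof is correct and follows essentially the same route as the paper: project the $r$-net $A$ to $r$-nets $\pi_i(A)\subset S_i$, use the correspondence $R=\{(\pi_1(a),\pi_2(a)):a\in A\}$ to bound $\dgh(\pi_1(A),\pi_2(A))\le r/2$, and conclude by the triangle inequality. The paper simply asserts the strict bound $\dgh(S_i,\pi_i(A))<r$ at the outset (whereas you carefully flag the strict-versus-nonstrict issue at the end), but otherwise the arguments are identical.
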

\begin{proof}
Denote by $d_i$ the metric of the metric quotients $S_i$ and by $\pi_i : X \to S_i$ the projection maps. 
Since $A$ is an $r$-net on $X$, each $A_i = \pi_i (A)$ is an $r$-net on $S_i$. 
Therefore, $\dgh(S_i, A_i) < r$, where $A_i$ is considered as a metric space with the restriction of $d_i$. 
This, with the triangle inequality, gives:
\begin{eqnarray} 
 \dgh (S_1, S_2) & \leq & \dgh (S_1, A_1) + \dgh (A_1, A_2) + \dgh (S_2, A_2) \\
                 & <    & 2 r + \dgh (A_1, A_2) \, . \label{eqn:thmgh1-1}
\end{eqnarray}

To bound the last term, it suffices to bound the distortion of some correspondence $R$ between $A_1$ and $A_2$, due to formula (\ref{eqn:dghdis}) for the Gromov-Hausdorff distance. 
So let $R$ be induced by restrictions $\pi_1 : A \to A_1$ and $\pi_2 : A \to A_2$. 
Recall that, by the definition of quotient metric, $d_i (\pi_i (x), \pi_i (y)) = d^{\G_i} (x,y)$ for any $x,y \in X$. 
Then, the hypothesis (\ref{eqn:thmgh1-hyp}) bounds the distortion of $R$:
\begin{eqnarray}
 \dgh (A_1, A_2) & \leq & \frac{1}{2} \dis R \\
                 & =    & \frac{1}{2} \sup_{a,a' \in A} | d_1 (\pi_1 (a), \pi_1 (a')) - d_2 (\pi_2 (a), \pi_2 (a')) |  \\
		 & = 	& \frac{1}{2} \sup_{a,a' \in A} |d^{\G_1} (a,a') - d^{\G_2} (a,a')| \\
		 & \leq & \frac{r}{2} . \label{eqn:thmgh1-2}
\end{eqnarray}
The result follows from estimates (\ref{eqn:thmgh1-1}) and (\ref{eqn:thmgh1-2}).
\end{proof}

\begin{condition} \label{thm:gh2-hyp}
Given collections of subsets $\G_i$, $i = 1, 2$, of a metric space $(X,d)$, suppose that there exist $g_i \in \G_i$ and $D \subset X$ satisfying:
\begin{enumerate}
 \item \label{thm:gh2-hyp1a}
 For any $x, y \in X \setminus D$, $x \, \G_1 \, y$ if, and only if, $x \, \G_2 \, y$.
 
 \item \label{thm:gh2-hyp2a}
 If $x \in X \setminus D$ and $y \in D$ are such that $x \, \G_i \, y$, then $x,y \in g_i$ ($i = 1, 2$).
 
 \item \label{thm:gh2-hyp3a}
 For $i = 1, 2$, $g_i \cap (\partial D \setminus D) \neq \emptyset$.
\end{enumerate}
\end{condition}

\begin{prop} 
\label{prop:gh2-1}
 Let $X$ be a metric space and $D \subset X$. For any $x \in X \setminus D$, $y \in D$ and $z \in \partial D$:
 \begin{equation}
  d(x,z) \leq d(x,y) + \diam D \, .
 \end{equation}
Here, $\partial D$ and $\diam D$ denotes, respectively, the boundary and the diameter of $D$.
\end{prop}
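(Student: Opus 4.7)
The plan is to reduce the inequality to the triangle inequality together with a bound $d(y,z) \leq \diam D$. First, I would apply the triangle inequality in $(X,d)$ to write
\[
 d(x,z) \leq d(x,y) + d(y,z),
\]
so that the whole assertion reduces to showing $d(y,z) \leq \diam D$, no matter which boundary point $z \in \partial D$ we choose.

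Next, I would split according to whether $z \in D$ or $z \in \partial D \setminus D$. In the first case the conclusion is immediate since both $y$ and $z$ lie in $D$, and by definition of the diameter $d(y,z) \leq \diam D$. In the second case $z$ is a boundary point not belonging to $D$, hence an accumulation point of $D$: for every $\varepsilon > 0$ the ball around $z$ of radius $\varepsilon$ meets $D$, so there exists $z' \in D$ with $d(z,z') < \varepsilon$. Applying the triangle inequality again,
\[
 d(y,z) \leq d(y,z') + d(z',z) \leq \diam D + \varepsilon,
\]
and letting $\varepsilon \to 0$ gives $d(y,z) \leq \diam D$.

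Combining the two cases with the initial triangle inequality then yields $d(x,z) \leq d(x,y) + \diam D$, which is the claim. The only nontrivial point is the case $z \in \partial D \setminus D$, where one must appeal to the characterization of boundary points as limits of points in $D$; but this is standard and the hypothesis $x \in X \setminus D$ is in fact unused, so I do not expect any real obstacle.
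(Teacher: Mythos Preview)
Your proof is correct and essentially matches the paper's argument. Both reduce to the triangle inequality and the observation that any $z \in \partial D$ lies in $\overline{D}$, hence can be approximated by points of $D$; the paper phrases this via a single sequence $z_n \in D$ with $z_n \to z$ and passes to the limit in $d(x,z_n) \leq d(x,y) + \diam D$, whereas you split into the trivial case $z \in D$ and the $\varepsilon$-approximation case, but the content is the same. Your remark that the hypothesis $x \in X \setminus D$ is unused is also accurate.
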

\begin{proof}
 Let $z_n$ be a sequence in $D$ converging to $z$. For each $n$, the triangle inequality and the definition of diameter gives: 
\[
d(x, z_n) \leq d(x,y) + d(y, z_n) \leq d(x,y) + \diam D,
\]
Then, since $p \mapsto d (x,p)$ is a continuous map $X \to \R$, it follows:
\begin{eqnarray*}
d(x, z) & =  & d(x, \lim z_n) = \lim d(x, z_n) \\
& \leq & \lim (d(x,y) + d(y, z_n)) = d(x,y) + \lim d(y, z_n) \\
& \leq & d(x,y) + \diam D.
\end{eqnarray*}
\end{proof}

\begin{lemma} 
\label{lemma:gh2-2}
For collections of subsets $\G_i$, $i = 1, 2$, of a metric space $(X,d)$, if Conditions \ref{thm:gh2-hyp} are fulfilled, then the associated quotient semi-metrics $d^{\G_1}$ and $d^{\G_2}$ are such that:
\begin{equation}
 |d^{\G_1} (x,y) - d^{\G_2} (x,y)| \leq 2 \diam D \quad \forall x,y \in X \setminus D \, .
\end{equation}
\end{lemma}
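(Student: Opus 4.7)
The plan is to prove the one-sided bound $d^{\G_2}(x,y) \leq d^{\G_1}(x,y) + 2 \diam D$ for every $x, y \in X \setminus D$; the reverse bound, obtained by swapping $\G_1 \leftrightarrow \G_2$ and $g_1 \leftrightarrow g_2$ in the same argument, combines with this to give the lemma. Fix $\varepsilon > 0$ and take a $\G_1$-walk $\W = \{(x_j, y_j)\}_{j=1}^N$ from $x$ to $y$ of length at most $d^{\G_1}(x,y) + \varepsilon$; it suffices to build a $\G_2$-walk from $x$ to $y$ of length at most that of $\W$ plus $2 \diam D$ and then let $\varepsilon \to 0$.

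If the trace of $\W$ avoids $D$ entirely, hypothesis (i) of Conditions \ref{thm:gh2-hyp} converts each $\G_1$-jump into a $\G_2$-jump and $\W$ itself works. Otherwise, let $j_0$ and $j_1$ be the smallest and largest indices with $\{x_j, y_j\} \cap D \neq \emptyset$. The steps of $\W$ of index outside $[j_0, j_1]$, together with all their connecting jumps, have all endpoints in $X \setminus D$, so hypothesis (i) transfers those portions directly into $\G_2$-subwalks of the same length. The task thus reduces to replacing the middle---steps $j_0, \ldots, j_1$ together with the two boundary jumps into and out of $D$---by a $\G_2$-walk whose length exceeds the original middle by at most $2 \diam D$.

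The construction of this replacement splits into four cases depending on whether the middle enters $D$ by a step (i.e.\ $x_{j_0} \in X \setminus D$) or by a crossing $\G_1$-jump (i.e.\ $x_{j_0} \in D$), and similarly for the exit. Whenever such a boundary transition is a crossing jump, hypothesis (ii) places the exterior endpoint ($y_{j_0-1}$ or $x_{j_1+1}$) in the fixed $g_1 \in \G_1$; hypothesis (iii) then yields a point $p_1 \in g_1 \cap (\partial D \setminus D) \subset X \setminus D$, and hypothesis (i), applied inside $X \setminus D$, upgrades the derived $\G_1$-relation into a $\G_2$-relation between $p_1$ and that exterior endpoint. Using these ingredients: in the step-step case the new middle is the two-step $\G_2$-walk $\{x_{j_0}, p_1\}, \{p_1, y_{j_1}\}$ linked by the trivial jump $p_1 \, \G_2 \, p_1$, whose length, by Proposition \ref{prop:gh2-1} applied twice, exceeds the sum of the two original boundary-step lengths by at most $2 \diam D$; in the jump-jump case both $y_{j_0-1}$ and $x_{j_1+1}$ lie in $g_1$, whence hypothesis (i) gives $y_{j_0-1} \, \G_2 \, x_{j_1+1}$ directly and the middle collapses to a single $\G_2$-jump of length $0$; in the two mixed cases a single step to or from $p_1$ combined with the corresponding $\G_2$-jump at $p_1$ does the job, with an increase of at most $\diam D$ by Proposition \ref{prop:gh2-1}.

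The main delicate point is this case-by-case bookkeeping, and in particular the verification in each case that the jumps linking the preserved outer subwalks to the new middle are valid $\G_2$-jumps. This is precisely where the three hypotheses interact essentially: (ii) forcing every crossing $\G_1$-jump to pass through the common $g_1$, (iii) providing its boundary representative $p_1$, and (i) transporting the derived $\G_1$-relations across to $\G_2$. The length bounds then reduce to direct applications of Proposition \ref{prop:gh2-1} to the step-type boundary transitions.
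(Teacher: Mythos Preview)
Your proof is correct and follows essentially the same approach as the paper: the identical case split according to whether the walk enters and leaves $D$ via a step or a jump, the same use of Condition~(ii) to place crossing-jump endpoints in $g_1$, Condition~(iii) to supply the boundary point, and Proposition~\ref{prop:gh2-1} to control the added length. The only cosmetic differences are that the paper first produces a $\G_1$-walk lying entirely in $X\setminus D$ and then invokes Condition~(i) once to turn it into a $\G_2$-walk (whereas you build the $\G_2$-walk directly), and in the step--step case the paper uses two points of $g_1\cap(\partial D\setminus D)$ linked by a $g_1$-jump rather than a single $p_1$ with a trivial jump.
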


\begin{proof}
Denote $E = X \setminus D$.
It will be proven that:
\begin{equation}
 d^{\G_2} (x,y) \leq d^{\G_1} (x,y) + 2 \diam D \quad \forall x,y \in E \, .
\end{equation}
The proof that 
$d^{\G_1} (x,y) \leq d^{\G_2} (x,y) + 2 \diam D$ for any $x,y \in E$
is analogous, implying the result.

Let $x, y \in E$.
Due to Condition \ref{thm:gh2-hyp}-\ref{thm:gh2-hyp1a}, every $\G_1$-walk contained in $E$ is a $\G_2$-walk (of course with the same length).
Then, it suffices to show that to each $\G_1$-walk $\W = \{ x_j, y_j \}_{j=1}^N$ from $x$ to $y$ corresponds a $\G_1$-walk contained in $E$ with length at most $\ell(\W) + 2 \diam D $. 
Suppose that $\W$ intersects $D$, and let $1 \leq j_0 \leq j_1 \leq N$ be the first and last indices such that this happens: $\{ x_j, y_j \} \cap D \neq \emptyset$ for $j = j_0, j_1$; and $\{ x_j, y_j \} \cap D = \emptyset$ for every $j < j_0$ and $j > j_1$. 
The subwalk $\{x_{j_0}, y_{j_0} \, ; \ldots ; \, x_{j_1}, y_{j_1} \}$ will be modified to obtain the result. 
The cases when the walk enters and leaves $D$ through a steps or jumps are treated separately.

\textit{Case 1. $\W$ enters and leaves $D$ through jumps:} $x_{j_0}, y_{j_1} \in D$. In this case, since $y_{j_0 - 1}, x_{j_1 + 1} \in E$, Condition \ref{thm:gh2-hyp}-\ref{thm:gh2-hyp3a} implies that $y_{j_0 - 1}, x_{j_1 + 1} \in g_1$. Then, $\{x_{j_0}, y_{j_0} \, ; \ldots ; \, x_{j_1}, y_{j_1} \}$ can be simply removed from the walk, and the result is a $\G_1$-walk contained in $E$ with smaller length.

\textit{Case 2. $\W$ enters and leaves $D$ through steps:} $x_{j_0}, y_{j_1} \in E$ and, as a consequence, $y_{j_0}, x_{j_1} \in D$. In particular, $j_0 \neq j_1$. Condition \ref{thm:gh2-hyp}-\ref{thm:gh2-hyp3a} says that there exist $y_{j_0} ', x_{j_1} '  \in g_1 \cap (\partial D \setminus D)$, and Proposition \ref{prop:gh2-1} guarantee that:
\begin{equation}
 d(x_{j_0}, y_{j_0} ') + d(x_{j_1} ', y_{j_1}) \leq  d(x_{j_0}, y_{j_0}) + d(x_{j_1}, y_{j_1})  + 2 \diam D.
\end{equation}
It follows that replacing $\{x_{j_0}, y_{j_0} \, ; \ldots ; \, x_{j_1}, y_{j_1} \}$ by $\{ x_{j_0}, y_{j_0} ' \, ; \, x_{j_1} ', y_{j_1} \}$ produces the wanted $\G_1$-walk.
 
\textit{Case 3. $\W$ enters $D$ through a step and leaves it through a jump:} $x_{j_0} \in E$, $y_{j_0} \in D$ and $y_{j_1} \in D$. In particular, $j_0 > 1$ and $j_1 < N$. Then $N \geq j_1 + 1$, $x_{j_1 + 1} \in E$, and Condition \ref{thm:gh2-hyp}-\ref{thm:gh2-hyp2a} implies that $x_{j_1 + 1} \in g_1$. As in the previous case, Condition \ref{thm:gh2-hyp}-\ref{thm:gh2-hyp3a}, gives $y_{j_0} ' \in g_1 \cap (\partial D \setminus D)$, and again Proposition \ref{prop:gh2-1} shows that replacing $\{x_{j_0}, y_{j_0} \, ; \ldots ; \, x_{j_1}, y_{j_1} \}$ by $\{ x_{j_0}, y_{j_0} ' \}$ gives the desired $\G_1$-walk, as :
\begin{equation}
d(x_{j_0}, y_{j_0} ') \leq  d(x_{j_0}, y_{j_0}) + \diam D.
\end{equation}

By symmetry, the case when the walk enters through a jump and leaves through a step is analogous to Case 3.
\end{proof}


\begin{proof}[Proof of Theorem \ref{thm:main}]
For each $n \in \N$, the hypothesis says precisely that $g_n \in \G_n$, $g^n \in \G$ and $D_n \subset X$ fulfills Conditions \ref{thm:gh2-hyp}.
Denote $E_n = X \setminus D_n$, and fix any $\delta_0 > 0$.
Then, Lemma \ref{lemma:gh2-2} gives:
\begin{equation} \label{eqn:mainthm1}
|d^{\G_n} (x,y) - d^\G (x,y)| < 2 \diam D_n < 2 (1+\delta_0) \diam D_n \quad \forall x, y \in E_n \, .
\end{equation}

It's clear that, for each $n \in \N$, $x \in X$ and $r > \diam D_n$, $B(x,r) \cap E_n \neq \emptyset$.
In particular, each $E_n$ is a $[2(1+\delta_0)\diam D_n]$-net on $X$.
Thus, by (\ref{eqn:mainthm1}), the hypothesis of Lemma \ref{thm:gh1} hold for $r = 2(1+\delta_0) \diam D_n$, and it follows that:
\begin{equation}
\dgh (S_n,S) < 5(1 + \delta_0)\diam D_n .
\end{equation}
Finally, given $\varepsilon > 0$, let $n_0 \in \N$ be such that $5 (1+\delta_0) \diam D_n < \varepsilon$ for every $n \geq n_0$, so $\dgh (S_n,S) < \varepsilon$.
\end{proof}

\section{Application to paper-folding schemes}
\label{sec:paperlimit}

The conditions of Theorem \ref{thm:main} can be verified in the context of plain paper-folding schemes, yielding:

\begin{thm}
\label{thm:paperlimit}
 Given a plain paper-folding scheme $(P, \pp)$ and a sequence $\gamma_n \subset \partial P$, $n \in \N$, of $\pp$-plain arcs, let $(P,\pp_n)$ be a sequence of paper-folding schemes such that $\pp_n$ coincides with $\pp$ on $\partial P \setminus \overset{\circ}{\gamma_n}$ and $\gamma_n$ is $\pp_n$-plain for every $n$. 
 Let $S$ and $S_n$ be the paper spaces associated to $(P, \pp)$ and $(P,\pp_n)$, respectively. 
 If $|\gamma_n| \to 0$ as $n \to \infty$, then $\dgh (S, S_n) \to 0$.
\end{thm}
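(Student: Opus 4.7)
The plan is to invoke Theorem~\ref{thm:main} in the setting $X = P$ equipped with the intrinsic metric $d_P$, taking $\G = \pp$ and $\G_n = \pp_n$ (viewed as the collections of paired two-point subsets of $P$), and with $D_n := \overset{\circ}{\gamma_n}$ as the collapsing sets. Since $\gamma_n \subset \partial P$ is polygonal and $d_P$ on $\partial P$ is dominated by boundary arc length, $\diam D_n \leq |\gamma_n| \to 0$, supplying the diameter hypothesis of Theorem~\ref{thm:main}.

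The central leverage is $\pp$-plainness (and $\pp_n$-plainness) of $\gamma_n$: every segment pairing that intersects $\overset{\circ}{\gamma_n}$ is contained in $\gamma_n$. Combined with the assumption that $\pp_n$ coincides with $\pp$ on $\partial P \setminus \overset{\circ}{\gamma_n}$, this dichotomises any $\pp$-pair $\{x,y\}$: either the underlying segment pairing is disjoint from $\overset{\circ}{\gamma_n}$ (and then the pair belongs to both schemes), or it is entirely contained in $\gamma_n$. For condition~\ref{thm:gh2-hyp1} on $P \setminus \overset{\circ}{\gamma_n}$, the former case is symmetric between $\pp$ and $\pp_n$, while the latter forces $\{x,y\} \subseteq \{a,b\}$, where $a$ and $b$ denote the endpoints of $\gamma_n$; this residual case is the first subtle point, to be reconciled using Theorem~\ref{thm:plain}.

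For conditions~\ref{thm:gh2-hyp2} and~\ref{thm:gh2-hyp3} I would identify the crossing pairs, those with one point in $D_n$ and one outside. Plainness again reduces them to pairs of the form $\{a,z\}$ or $\{b,z\}$ with $z \in \overset{\circ}{\gamma_n}$. The candidates $g^n \in \pp$ and $g_n \in \pp_n$ will be chosen among such pairs; condition~\ref{thm:gh2-hyp3} is then immediate, since $\partial D_n \setminus D_n = \{a,b\}$ meets any such candidate.

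The main obstacle I expect is condition~\ref{thm:gh2-hyp2}: both $a$ and $b$ may simultaneously contribute distinct crossing pairs under $\pp$ (and likewise under $\pp_n$), whereas Theorem~\ref{thm:main} demands a single $g^n$ absorbing every crossing. Overcoming this, together with the $\{a,b\}$ case for condition~\ref{thm:gh2-hyp1}, will most likely require refining the choice of $D_n$, for instance by adjoining to $\overset{\circ}{\gamma_n}$ one of the endpoints or an interior point paired with it, in a manner coherent with both schemes simultaneously, so that at most one crossing pair survives and $a, b$ are handled consistently. This combinatorial adjustment, driven by the structural content of Theorem~\ref{thm:plain} on plain arcs, is where the real work of the proof lies.
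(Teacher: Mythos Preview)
Your overall plan---apply Theorem~\ref{thm:main} with $X=P$, $D_n=\overset{\circ}{\gamma_n}$, and use plainness to control how pairs cross $\partial D_n$---matches the paper, and you have correctly located the real obstruction: with $\G=\pp$ taken as the raw collection of two-point paired sets, condition~\ref{thm:gh2-hyp2} generally cannot be met, because both endpoints $a,b$ of $\gamma_n$ may carry distinct crossing pairs and no single two-element $g^n\in\pp$ can absorb them all. Your proposed fix of adjusting $D_n$ does not close this gap: adjoining $b$, for instance, manufactures a new crossing pair through the segment on the \emph{outside} of $\gamma_n$ incident to $b$, and you are back where you started. No tinkering with $D_n$ gets around the fact that elements of the raw $\pp$ are two-point sets.

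The paper sidesteps the obstacle by changing $\G$, not $D_n$: it takes $\G$ and $\G_n$ to be the collections of $\sim_\pp$- and $\sim_{\pp_n}$-equivalence classes (noting $d^\pp=d^{\G}$, so the metric quotients are unchanged). Now elements of $\G$ may be large, and in particular $g^n:=[a]_{\sim_\pp}=[b]_{\sim_\pp}$ is a \emph{single} element of $\G$ containing both endpoints, the equality coming from Theorem~\ref{thm:plain} applied to the $\pp$-plain arc $\gamma_n$. Condition~\ref{thm:gh2-hyp2} then follows from the saturation clause of Theorem~\ref{thm:plain}: $\gamma_n\setminus g^n$ is $\sim_\pp$-saturated, so any $y\in D_n$ with a $\sim_\pp$-partner outside $D_n$ already lies in $g^n$. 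Condition~\ref{thm:gh2-hyp1} is handled not by matching raw pairs but again via Theorem~\ref{thm:plain}: for $x,y\in\partial P\setminus D_n$ that are $\sim_\pp$-equivalent but not an interior pair, the arc $[x,y]\subset\partial P\setminus D_n$ is $\pp$-plain, hence $\pp_n$-plain (the schemes agree there), hence $x\sim_{\pp_n}y$; this also disposes of your residual $\{a,b\}$ case automatically. The ``combinatorial adjustment'' you anticipate is thus replaced by a change of viewpoint on $\G$, after which all three conditions fall out of Theorem~\ref{thm:plain} with no further work.
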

\begin{proof}
 Let $\sim_{\pp}$ and $\sim_{\pp_n}$ be the equivalence relations associated to the quotient semi-metrics $d^\pp$ and $d^{\pp_n}$, respectively, and take as $\G$ and $\G_n$ the collections of $\sim_\pp$ and $\sim_{\pp_n}$-equivalence classes in accordance.
 Notice that $d^\pp = d^{\G}$ and $d^{\pp_n} = d^{\G_n}$, so the paper spaces $S$ and $S_n$ are precisely the metric quotients associated to $\G$ and $\G_n$.
 Let $D_n = \overset{\circ}{\gamma_n}$, and denote by $z_n$ and $w_n$, $z_n \neq w_n$, its endpoints, whose equivalence classes are denoted $[z_n]_{\sim_\pp} = [w_n]_{\sim_\pp}$ and $[z_n]_{\sim_{\pp_n}} = [w_n]_{\sim_{\pp_n}}$.
 These equalities are guaranteed by the hypothesis that each $\gamma_n$ is both $\pp$ and $\pp_n$-plain, due to Theorem \ref{thm:plain}.
 Finally, let $g^n = [z_n]_{\sim_\pp} \in \G$ and $g_n = [z_n]_{\sim_{\pp_n}}  \in \G_n$.
 The conditions of Theorem \ref{thm:main} will be verified in the sequence.
  
 \textit{Condition \ref{thm:gh2-hyp1}.}
 Let $x,y \in P \setminus D_n$ be such that $x \, \G \, y$.
 If $x = y$, then it is obvious that $x \, \G_n \, y$.
 For instance, this is the case when one of these points is interior to $P$.
 So assume that $x \neq y$ and $x, y \in \partial P$.
 In case $\{ x, y \}$ is an interior $\pp$-pair, then it is also an interior $\pp_n$-pair, and $x \, \G_n \, y$, since $\pp_n$ coincides with $\pp$ on $\partial P \setminus D_n$.
 And in case $\{ x, y \}$ is not an interior $\pp$-pair, consider the arc $[x,y]$ having these points as endpoints contained in $\partial P \setminus D_n$.
 Since $(P,\pp)$ is plain, Theorem \ref{thm:plain} applies, and $[x,y]$ is a $\pp$-plain arc. 
 It is also a $\pp_n$-plain arc, since $\pp_n$ coincides with $\pp$ on $[x,y]$. 
 Then, $x \, \G_n \, y$ follows from Theorem \ref{thm:plain}.
 The proof that $x \, \G_n \, y$ implies $x \, \G \, y$ for $x,y \in P \setminus D_n$ is completely analogous, the last step being valid since $(P,\pp_n)$ is plain paper-folding scheme, which can be easily verified.

 \textit{Condition \ref{thm:gh2-hyp2}.}
 Theorem \ref{thm:plain} says that, since each $\gamma_n$ is both $\pp$ and $\pp_n$-plain, every $\gamma_n \setminus g^n$ and $\gamma_n \setminus g_n$ are, respectively, $\sim_{\pp}$ and $\sim_{\pp_n}$-saturated.
 By definition, this means that $[y]_{\sim_{\pp}} \subset \gamma_n \setminus g^n$ (resp. $[y]_{\sim_{\pp_n} } \subset \gamma_n \setminus g_n$) for every $y \in \gamma_n \setminus g^n$ (resp. $y \in \gamma_n \setminus g_n$).
 Of course, for each $y \in D_n$, either $y \in g^n$ (resp. $y \in g_n$), or $y \in \gamma_n \setminus g^n$ (resp. $y \in \gamma_n \setminus g_n$).
 Therefore, if $x \, \G \, y$ (resp. $x \, \G_n \, y$) with $x \in P \setminus D_n$ and $y \in D_n$, then $x,y \in g^n$ (resp. $x, y \in g_n$).

 \textit{Condition \ref{thm:gh2-hyp3}.} 
 As $\partial D_n = \gamma_n$, $\partial D_n \setminus D_n = \{ z_n, w_n \}$, so it's clear that $g^n \cap (\partial D_n \setminus D_n) \neq \emptyset$ and $g_n \cap (\partial D_n \setminus D_n) \neq \emptyset$. 

 The proof is concluded by noticing that $\lim_{n \to \infty}\diam D_n = 0$ is a consequence of $\lim_{n \to \infty} |\gamma_n| = 0$. This is imediate if $\gamma_n$ does not contain any point whose angle internal to $P$ is smaller than $\pi$, as in this case $\diam D_n = |\gamma_n|$; and follows from the Law of Cosines, otherwise.  
 \end{proof}

 The following examples illustrates how to use Theorem \ref{thm:paperlimit}. 
 It includes every identification pattern that appears in \cite{unimodal}. 
 Recall that in the associated Figures, dotted lines connect points paired by the paper-folding scheme.
 
 \begin{figure}
\centering
\includegraphics{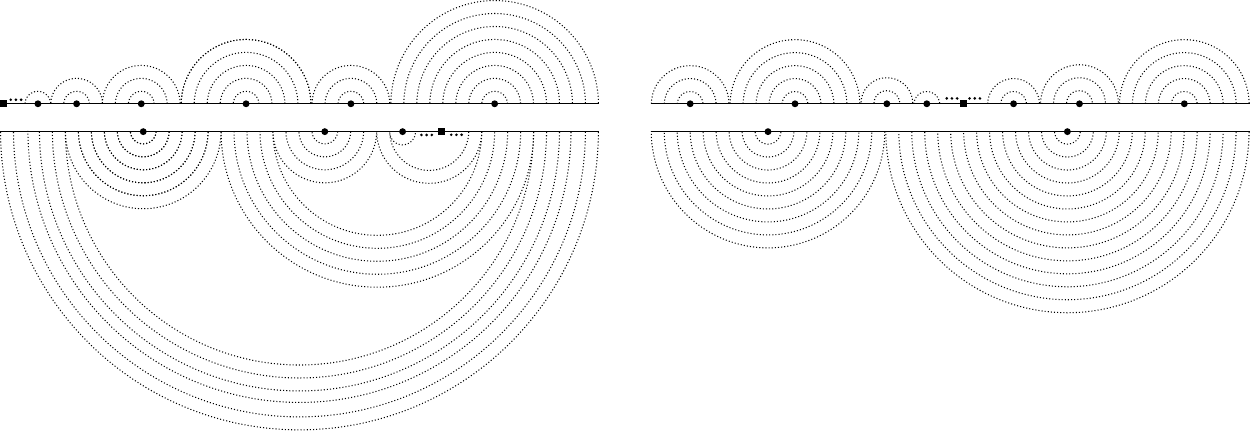}
\caption{Identification patterns (\ref{eqn:canon2}), (\ref{eqn:canon1}) and (\ref{eqn:singular1}) on the arcs $\gamma_n$ in Example \ref{exem:general}, and a couple of folds used for approximating them. 
The point $p$ corresponds to $\blacksquare$.}
\label{fig:general}
\end{figure} 
  
\begin{exem}[Figure \ref{fig:general}]
\label{exem:general}
Suppose that a plain paper-folding scheme $(P,\pp)$ is such that a point $p \in \partial P$ is the intersection of a nested sequence $\gamma_n$, $n \in \N^*$, of arcs contained in $\partial P$, on which $\pp$ has one of the following forms:
 \begin{equation} \label{eqn:canon2}
  p \cdots \alpha_{n+1} ' \, \alpha_{n+1} \, \alpha_n ' \, \alpha_n \, .
 \end{equation}
 \begin{equation} \label{eqn:canon1}
 \alpha_{-n} \, \alpha_{-n} ' \, \alpha_{-n-1} \, \alpha_{-n-1} ' \cdots p \cdots \alpha_{n+1} ' \, \alpha_{n+1} \, \alpha_n ' \, \alpha_n \, .
 \end{equation}
\begin{equation} \label{eqn:singular1}
 \beta_n \, \alpha_{n+1} \, \alpha_{n+1} ' \, \beta_{n+1} \, \alpha_{n+2} \, \alpha_{n+2} '  \cdots p \cdots \, \beta_{n+1} ' \, \beta_n ' .
\end{equation}
In case $p$ is a vertex of $P$, assume that every $\gamma_n$ is contained in the pair of sides of $P$ meeting at $P$ (for pattern (\ref{eqn:canon2}) it end up contained in just one of them).

To define $\pp_n$ as in Theorem \ref{thm:paperlimit}, the simplest choices of plain patterns to place on $\gamma_n$ are a couple of folds folds. 
So, for each $n \in \N^*$, let the restriction of $\pp_n$ to $\gamma_n$ be of the form $\phi_n \, \phi_n ' \, \psi_n ' \, \psi_n$.
When $\gamma_n$ is contained in one side of $P$, a single fold can be used as well.
Theorem \ref{thm:paperlimit} readily aplies, and the paper spaces $S$ associated to $(P,\pp)$ are the Gromov-Hausdorff limits of the sequences of paper spaces $S_n$ associated to $(P,\pp_n)$.
Notice that the lengths of the pairings are not important here.
Recall that, due to Theorem \ref{thm:plain}, all these quotients are homeomorphic to $2$-spheres.
\end{exem}

\begin{figure}
\centering
\includegraphics[width=\linewidth]{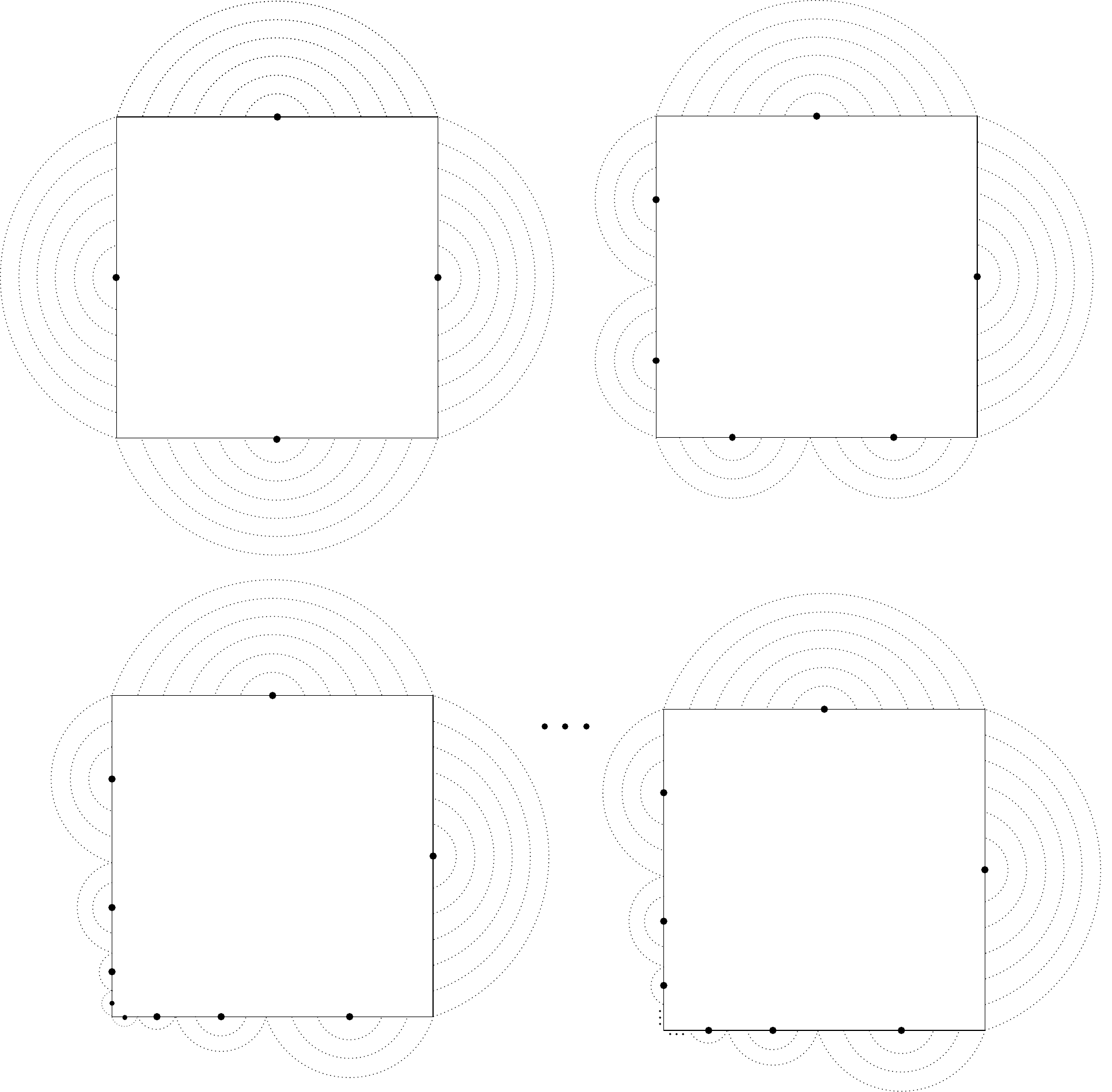}
\caption{Sequences of conic-flat spheres without singularities converging to conic-flat spheres with precisely one singularity (Example \ref{exem:canon1}).}
\label{fig:canon1}
\end{figure}

\begin{exem}[Figure \ref{fig:canon1}]
 \label{exem:canon1}
 As a particular case of Example \ref{exem:general}, Figure \ref{fig:canon1} shows plain a paper-folding scheme on a square that is of the form (\ref{eqn:canon1}) around the inferior left-hand vertex $p$, and three elements of the sequence constructed above approximating it.
When, as indicated, $P$ is an Euclidean square and the lengths of folds producing the limiting paper space are chosen by halving, $S$ is the domain of the self-homeomorphism of the sphere known as the tight horseshoe \cite{dCH}.

Independently of these choices, Figure \ref{fig:canon1} gives concrete examples of sequences of conic-flat spheres without singularities converging to a conic-flat sphere with precisely one singular point, the projection $\hat{p}$ of $p$. 
It possesses the following two singular properties. 
First, $\hat{p}$ is the accumulation of the projections of the folding points marked as $\bullet$ in Figure \ref{fig:canon1}. 
These are conical points with total angle around it in $S$ equal to $\pi$ and, thus, with positive curvature. 
Second, $\hat{p}$ is \textit{the vertex of an $\infty$-od in $S$}: there exist a convex subspace $K$ of $S$, whose points are flat except for $\hat{p}$, isometric to the intrinsic metric of a countable infinite collection of pairwise disjoint half-closed intervals glued together by its endpoints, which corresponds to $\hat{p}$ (here, convex means that every shortest path connecting points in $K$ is contained in $K$).
This is related to $\hat{p}$ having infinite total angle around it in $S$.
Due to these two singular properties, arbitrarily small neighborhoods of $\hat{p}$ contain geodesic triangles implying that $S$ violates the definitions of bounded curvature, both above and below, in the sense of comparative geometry \cite{BBI}.
Analogous properties hold for the projection of the point $p$ in pattern (\ref{eqn:canon2}).
\end{exem}

\begin{remark}
The projection of the point $p$ in pattern (\ref{eqn:singular1}) is in $S$ the accumulation of conical points with total angles equal to $\pi$ and $3 \pi$ around it, while it is not the vertex of an $\infty$-od in $S$.
Singular properties such as this and the ones mentioned above will be rigorously pursued in further works, as mentioned in section \ref{sec:back-paperspaces}.  
\end{remark}

\begin{figure}
\centering
\includegraphics{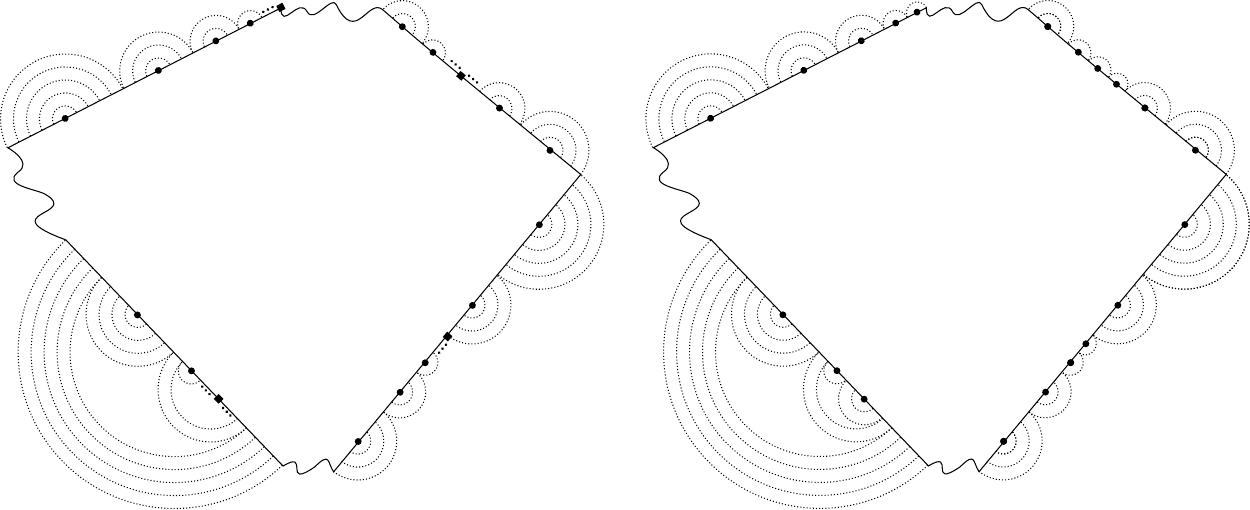}
\caption{Applying the construction in Example \ref{exem:general} repeatedly (Example \ref{exem:repeat}). Loose lines represent generic portions of $\partial P$.}
\label{fig:repeat}
\end{figure}

\begin{exem}[Figure \ref{fig:repeat}]
 \label{exem:repeat}
 As an example of how a given plain paper-folding scheme $(P, \pp)$ can be successively ``simplified'' in order to approximate its quotient $S$ by a sequence of spheres with fewer / less complicated singularities, suppose that there are pairwise distinct points $p_1, \ldots, p_m \in \partial P$ around which $\pp$ has the forms of Example \ref{exem:general}.
 Given $\varepsilon > 0$, let $n_1 \in \N$ be such that $\dgh (S, S_{1,n}) < \varepsilon/m$ for every $n \geq n_1$, where $S_{1,n}$ is the paper space associated to a $\pp_{1,n}$ constructed as in Example \ref{exem:general}.
 Now, approximate $S_{1,n_1}$ by modifying $\pp_{1,n_1}$ around $p_2$, obtainining a sequence $\pp_{2,n}$ and $n_2 \in \N$ whose quotients $S_{2,n}$ satisfy $\dgh (S_{1,n_1},S_{2,n}) < \varepsilon/m$ for every $n \geq n_2$.
 Repeating this for $S_{2,n_2}$, and so on, a sequence $S_{m,n}$ converging to $S_{m-1,n_{m-1}}$ as $n \to \infty$ is obtained, associated to patterns $\pp_{m,n}$ that coincides with the original $\pp$ except at small arcs containing $p_1, \ldots, p_m$, where it contains only folds. 
 Due to the triangle inequality for $\dgh$, $\dgh(S,S_{m,n}) < \varepsilon$ for every $n \geq n_m$.
\end{exem}



\bibliographystyle{alpha}
\bibliography{references}

\end{document}